\newtheorem {Lemma}{Lemma}[section]
\newtheorem{Corollary}[Lemma]{Corollary}
\newtheorem {Theorem} [Lemma]{Theorem}
\begin{document}

\title{  A family of graphs that are determined by their normalized Laplacian spectra \thanks{This work is supported by  the Joint NSFC-ISF Research Program (jointly funded by the National Natural Science Foundation of China and the Israel Science Foundation (Nos. 11561141001, 2219/15),  the National Natural Science Foundation of China (No.11531001).
}}

\author{ Abraham Berman,\\
Department of Mathematics, \\
Technion-Israel Institute of Technology, Haifa 32000, Israel\\
Dong-Mei Chen, Zhi-Bing  Chen\\
College of Mathematics and Statistics, \\
 Shenzhen University, Shenzhen 518060,  P.R. China\\
  Wen-Zhe Liang,  Xiao-Dong Zhang\footnote{Corresponding author. E-mail: xiaodong@sjtu.edu.cn}\\
School of Mathematical Sciences, MOE-LSC, SHL-MAC\\
Shanghai Jiao Tong University,
Shanghai 200240, P. R. China}


\date{}
\maketitle

\begin{abstract}

Let  $F_{p,q}$ be the generalized friendship graph $K_1\bigvee (pK_q)$ on $pq+1$ vertices obtained by joining a vertex to all vertices of $p$ disjoint copies of the complete graph $K_q$ on $q$ vertices. In this paper, we prove that $F_{p,q}$ is determined by its normalized Laplacian spectrum  if and only if $q\ge 2$, or $q=1$ and $p\le 2$.
\\ \\
{\it AMS Classification:} 05C50,  05C35\\ \\
{\it Key words:} Normalized Laplacian spectrum; generalized friendship graph; spectral characterization; cospectral graph.
\end{abstract}

\section{Introduction}

 Spectral graph theory studies  the relations between the structure of a graph and eigenvalues of  matrices associated with it.
  One of the main problems in  spectral graph theory is  which graphs are determined by their spectrum or equivalently, finding nonisomorphic graphs
 $G$ and $H$ that have the same  spectrum.  Many results on these questions can be found in two excellent surveys   (see \cite{van2003,van2009}) by
  Van Dam and Haemers.

 Let $G $ be an   undirected  simple  graph with vertex set
$V(G)
$ and edge set $E(G)$. Let $d_v$ be the degree of a vertex $v\in V(G)$.
 The {\it normalized Laplacian matrix} of a graph $G$ is defined to be
 $\mathcal{L}(G)=(l_{uv})$, where
 $$l_{uv}=\left\{ \begin{array}{ll}
 1, & \mbox{if $u=v$ and $u$ is not an isolated vertex;}\\
 -\frac{1}{\sqrt{d_ud_v}}, & \mbox{if $u$ is adjacent to $v$;}\\
 0, &  \mbox{otherwise}.
 \end{array}\right.$$
 The eigenvalues of $\mathcal{L(G)}$ are called $\mathcal{L}-$eigenvalues. This paper deals with the normalized Laplacian matrix of $G$, so we denote its spectrum of $G$ (the all eigenvalues  $\mathcal{L}(G)$ of $G$, including multiplicities) by $Sp(G)$.  We say that $G$ and $H$ are {\it cospectral} if they are not isomorphic, but $Sp(G)=Sp(H)$, and that $G$ is determined by its normalized Laplacian spectrum if $Sp(H)=Sp(G)$ only when $H$ is isomorphic to $G$.

  Chung in \cite{fan1997} showed how the normalized Laplacian spectrum reveals  fundamental properties and structure of a graph.
   Butler  \cite{butler2015} surveyed  algebraic aspects of $\mathcal{L}(G)$ and provided (see \cite{butler2011} and \cite{butler2016}) several methods of constructing  cospectral graphs. Almost all small graphs are determined by their normalized Laplacian spectrum (see \cite{butler2016}).
    The normalized Laplacian spectrum of a complete graph $K_n$ is $0, \frac{n}{n-1}$ with multiplicities $1$ and $n-1$ respectively, and $K_n$ is determined by this spectrum \cite{butler2016}.

      Butler \cite{butler2015} conjectured that the only graphs cospectral with a cycle are $K_{1,3}$ and the graph $\gamma_{4k}$ obtained  by identifying the center vertex of a path on $2k+1$ vertices and a vertex of a cycle on $2k$ vertices; i.e.,  a cycle on   $n$ vertices is determined by its normalized Laplacian spectrum if and only if  $n>4$ and $4\nmid n$.  In general, up to now, there are very few graphs that are known to be determined by their normalized Laplacian spectrum. In this paper, we present a family of graphs that are determined by their spectrum.

   Denote by $F_{p, q}$ the graph $K_1\bigvee (pK_q)$ on $pq+1$ vertices obtained by joining a vertex to all vertices of $p$ disjoint copies of the complete graph $K_q$ on $q$ vertices.  The {\it friendship graph $F_k$} consists of $k$ edge-disjoints triangles that meet in one vertex (for the famous friendship theorem, see \cite{erdos1966}).   Liu et. al. \cite{liu2008} proved the $F_k$ is determined by its Laplacian spectrum (see also \cite{lin2010}). Wang et. al. \cite{wang2010} proved that $F_k$ is determined by its signless Laplacian spectrum. Recently, Cioab\u{a} et. al. \cite{cioaba2015} proved that $F_k$ is determined by its adjacent  spectrum  if and only if  $k\neq 16 $ , and that  for $F_{16}$, there is only one graph nonisomorphic to $F_{16}$, but with the same adjacency spectrum. The friendship graphs are a subfamily of $F_{p, q}$, since $F_k=F_{k, 2}$.

    In this paper  we show that $F_{p, q}$ is determined by its normalized Laplacian spectrum if and only if $q\ge 2$, or $q=1$ and $p\le 2$. This shows of course that the friendship graphs are determined by their normalized Laplacian spectrum.

\begin{Theorem}\label{main} If $q\ge 2$, or $q=1$ and $p\le 2$, then
  $F_{p,q} $ is determined by its normalized Laplacian spectrum. If $q=1$ and $p\ge 3$, then there is a graph $G$, not isomorphic to $F_{p,q}$ such that $Sp(G)=Sp(F_{p, q})$.
\end{Theorem}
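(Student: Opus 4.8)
The plan is the following.

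\textbf{Step 1: the spectrum of $F_{p,q}$.} First I would compute $Sp(F_{p,q})$ by writing down eigenvectors of $\mathcal L:=\mathcal L(F_{p,q})$. Let $v_0$ be the dominating vertex, so $d_{v_0}=pq$ while every other vertex has degree $q$. Three families of eigenvectors suffice: (i) those vanishing at $v_0$ whose entries sum to zero on each copy of $K_q$, a space of dimension $p(q-1)$ with eigenvalue $(q+1)/q$; (ii) those vanishing at $v_0$ and constant on each copy, the $p$ values summing to zero, dimension $p-1$, eigenvalue $1/q$; (iii) the $2$-dimensional space spanned by $v_0$ and the vector that is constant on every copy, on which $\mathcal L$ acts through a $2\times 2$ matrix of trace $(q+1)/q$ and determinant $0$, hence with eigenvalues $0$ and $(q+1)/q$. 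This gives
\begin{equation}\label{eq:spec-Fpq}
Sp(F_{p,q})=\{\,0,\ (1/q)^{(p-1)},\ ((q+1)/q)^{(pq-p+1)}\,\}.
\end{equation}
When $p=1$ this is $\{0,((q+1)/q)^{(q)}\}$, the spectrum of $K_{q+1}=F_{1,q}$.

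\textbf{Step 2: the exceptional family $q=1$, $p\ge 3$.} Here $F_{p,1}=K_1\vee(pK_1)=K_{1,p}$. Diagonalising the rank-$2$ matrix $D^{-1/2}AD^{-1/2}$ of a complete bipartite graph gives $Sp(K_{a,b})=\{0,1^{(a+b-2)},2\}$ for all $a,b\ge 1$, so $Sp(K_{1,p})=\{0,1^{(p-1)},2\}=Sp(K_{2,p-1})$. Since $K_{1,p}$ has $p$ edges and $K_{2,p-1}$ has $2p-2>p$ edges when $p\ge 3$, the two graphs are non-isomorphic; this proves the second assertion of the theorem. (The construction collapses when $p\le 2$, as $K_{1,2}=K_{2,1}$.)

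\textbf{Step 3: reductions for the ``determined'' direction.} Let $G$ satisfy $Sp(G)=Sp(F_{p,q})$. The multiplicity of $0$ in $Sp(\mathcal L(G))$ is the number of components (isolated vertices included), and $\operatorname{tr}\mathcal L(G)$ is the number of non-isolated vertices, so by \eqref{eq:spec-Fpq} the graph $G$ is connected on $pq+1$ vertices. If $p=1$, then $\mathcal L(G)$ has only two distinct eigenvalues, and a connected graph with two distinct normalized Laplacian eigenvalues is complete; hence $G=K_{pq+1}=F_{1,q}$. If $q=1$ and $p=2$, then $Sp(G)=\{0,1,2\}$ and $G$ is a connected graph on three vertices other than $K_3$ (whose spectrum is $\{0,3/2,3/2\}$), so $G\cong P_3=F_{2,1}$. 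It remains to treat $p,q\ge 2$, where additionally $(q+1)/q<2$ forces $G$ to be non-bipartite, with exactly the three distinct $\mathcal L$-eigenvalues $0<1/q<(q+1)/q$.

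\textbf{Step 4: the main case $p,q\ge 2$, and the main obstacle.} Set $M:=I-\mathcal L(G)=D^{-1/2}AD^{-1/2}$, with eigenvalues $1$, $((q-1)/q)^{(p-1)}$, $(-1/q)^{(pq-p+1)}$. Because $-1/q$ is the least eigenvalue of $M$, the matrix $B:=qM+I$ is positive semidefinite of rank $(pq+1)-(pq-p+1)=p$, with $B_{uu}=1$ and, for $u\ne v$, $B_{uv}=q/\sqrt{d_ud_v}$ if $u\sim v$ and $B_{uv}=0$ otherwise. Equivalently, there exist unit vectors $\{y_u\}_{u\in V(G)}$ in $\mathbb R^{p}$ with $\langle y_u,y_v\rangle=0$ when $u\not\sim v$ and $\langle y_u,y_v\rangle=q/\sqrt{d_ud_v}\in(0,1]$ when $u\sim v$; moreover $\langle y_u,y_v\rangle=1$ occurs exactly when $u\sim v$ and $d_ud_v=q^2$, and then $y_u=y_v$, which (the $y_u$ encoding all adjacencies) forces $u,v$ to be adjacent twins of degree $q$. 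In parallel, since $\mathcal L(G)$ has only the eigenvalues $0,1/q,(q+1)/q$, it satisfies $\mathcal L(G)(\mathcal L(G)-\tfrac1q I)(\mathcal L(G)-\tfrac{q+1}{q}I)=0$; reading this relation on the diagonal gives, for every vertex $u$,
\begin{equation}\label{eq:triangle-id}
\sum_{\{v,w\}:\ uvw\ \text{a triangle}}\frac{1}{d_ud_vd_w}=\frac{q-1}{q}\Big(\frac{1}{d_u}\sum_{x\sim u}\frac{1}{d_x}-\frac{1}{2q}\Big),
\end{equation}
while the identity $\operatorname{tr}\mathcal L(G)^2=(pq+1)+\sum_u \tfrac1{d_u}\sum_{x\sim u}\tfrac1{d_x}$ shows that $\sum_u \tfrac1{d_u}\sum_{x\sim u}\tfrac1{d_x}$ is fixed by the spectrum. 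The plan is to combine the rank-$p$ Gram representation with \eqref{eq:triangle-id} and these trace constraints to prove that $G$ has a dominating vertex $v_0$ of degree $pq$ and that $G-v_0$ is a disjoint union of $p$ copies of $K_q$, whence $G\cong F_{p,q}$. The main obstacle is precisely this structural identification: one has to show that the vectors $y_u$ split into $p$ mutually orthogonal twin-clusters spanning $\mathbb R^{p}$, that each cluster has size exactly $q$, and that the single remaining vertex is adjacent to all others. The quantitative inputs that should make this bookkeeping close are the non-negativity of the left-hand side of \eqref{eq:triangle-id} (so $\tfrac1{d_u}\sum_{x\sim u}\tfrac1{d_x}\ge\tfrac1{2q}$, with equality iff $u$ lies in no triangle), the positive semidefiniteness and fixed rank of $B$, and the spectrally determined value of $\sum_u\tfrac1{d_u}\sum_{x\sim u}\tfrac1{d_x}$.
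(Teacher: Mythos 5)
Your Steps 1--3 are fine and match the paper's treatment of the easy cases: the spectrum computation, the cospectral pair $K_{1,p}$, $K_{2,p-1}$ for $q=1$, $p\ge 3$, and the reductions for $p=1$ and for $q=1$, $p=2$. But Step 4, which is the entire substance of the theorem when $p,q\ge 2$, is not a proof: you set up the rank-$p$ Gram representation of $B=qM+I$, the diagonal identity \eqref{eq:triangle-id}, and a trace constraint, and then you state as ``the main obstacle'' exactly the thing that has to be proved --- that the vectors $y_u$ organize into $p$ orthogonal twin-clusters of size exactly $q$ plus one vertex adjacent to everything. Nothing in your outline delivers this. In particular, all of your quantitative inputs involve the unknown degrees $d_u$ (the inner products are $q/\sqrt{d_ud_v}$), and pinning down the degree sequence of a cospectral mate --- one vertex of degree $pq$ and all others of degree $q$, with $2m=pq(q+1)$ --- is precisely where the difficulty lies; your diagonal identity and the spectral determination of $\sum_u \widehat d_u/d_u$ are far too coarse to force it.

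For comparison, the paper closes this gap by invoking Van Dam--Omidi's characterization of connected graphs with the three $\mathcal L$-eigenvalues $0,\tfrac1q,1+\tfrac1q$, which yields three \emph{pointwise} identities: one for each vertex ($\widehat d_u$), one for each adjacent pair ($\widehat\lambda_{uv}$), and one for each nonadjacent pair ($\widehat\mu_{uv}$) --- effectively the full entrywise matrix identity, not just its diagonal and trace, which is what you extracted from the minimal polynomial. These are then exploited in a chain of lemmas analyzing the neighborhood of a minimum-degree vertex $x$: first $2\le\delta\le q+1$, then $d_{y_1}=\delta$, then $N(x)\cap N(y_1)=\{y_2,\dots,y_\delta\}$, and eventually $\delta=q$, $d_{y_q}=pq$, all other degrees equal to $q$ and $2m=pq(q+1)$; only after that does the clique-by-clique reconstruction of $F_{p,q}$ go through. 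If you want to salvage your route, you would at minimum need the pairwise identities (the off-diagonal entries of the cubic relation) and an argument replacing that case analysis; as written, the central structural identification is asserted as a plan rather than established, so the proof of the main case is missing.
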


\begin{Corollary}
The friendship graph is determined by its normalized Laplacian spectrum
\end{Corollary}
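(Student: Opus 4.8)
The plan is to compute $Sp(F_{p,q})$ exactly and then show that any cospectral graph is forced to be $F_{p,q}$. For the spectrum, partition $V(F_{p,q})$ into the apex $v$ (of degree $pq$) and the $p$ copies $V_1,\dots,V_p$ of $K_q$ (all of whose vertices have degree $q$). On the subspace of vectors that vanish at $v$ and sum to zero on each $V_i$, a one-line computation shows $\mathcal L(F_{p,q})$ acts as $\tfrac{q+1}{q}I$, so $\tfrac{q+1}{q}$ is an eigenvalue of multiplicity $p(q-1)$; on the complementary space of vectors constant on each $V_i$ one gets a $(p{+}1)$-dimensional quotient which, by the symmetry permuting the copies, splits off a $(p{-}1)$-dimensional block with eigenvalue $\tfrac1q$ and a $2\times2$ block with characteristic polynomial $t\big(t-\tfrac{q+1}{q}\big)$. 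Hence
$$Sp(F_{p,q})=\Big\{0^{(1)},\ \big(\tfrac1q\big)^{(p-1)},\ \big(\tfrac{q+1}{q}\big)^{(p(q-1)+1)}\Big\}.$$
This already settles the ``only if'' part: for $q=1$ this reads $\{0,1^{(p-1)},2\}$, which is also the spectrum of $K_{2,p-1}$; when $p\ge 3$ the latter is connected and not isomorphic to $K_{1,p}=F_{p,1}$, while for $p\le 2$ one checks that $F_{p,1}\in\{K_2,P_3\}$ is the unique graph on $p+1$ vertices with that spectrum.

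For the ``if'' direction, fix $q\ge 2$ and let $H$ be cospectral with $F_{p,q}$. Since the multiplicity of the eigenvalue $0$ equals the number of connected components, $H$ is connected with no isolated vertex, so $|V(H)|=\mathrm{tr}\,\mathcal L(H)=pq+1$; since the largest eigenvalue $\tfrac{q+1}{q}$ is less than $2$, $H$ is non-bipartite; and since $\mathcal L(H)$ has at most three distinct eigenvalues, $\mathrm{diam}(H)\le 2$. The eigenvalue list recovers $q$ (from $\tfrac1q$) and then $p=(|V(H)|-1)/q$. If $p=1$, then $H$ has exactly two distinct eigenvalues, hence diameter $\le 1$, hence $H=K_{q+1}=F_{1,q}$, which is determined by its spectrum. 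So from now on assume $p,q\ge 2$.

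The crux is a rigidity statement for $qA(H)+D(H)$. Writing $P=D(H)^{-1/2}A(H)D(H)^{-1/2}$, so $\mathcal L(H)=I-P$, the matrix $G:=(q+1)I-q\,\mathcal L(H)=qP+I$ is determined by $Sp(H)$ and has eigenvalues $q+1$ (once), $q$ ($p-1$ times) and $0$ ($p(q-1)+1$ times); in particular $G$ is positive semidefinite of rank $p$. Since $G=D(H)^{-1/2}\big(qA(H)+D(H)\big)D(H)^{-1/2}$, the matrix $M:=qA(H)+D(H)$ is likewise positive semidefinite of rank $p$, with $M_{ii}=d_i$ and, for $i\ne j$, $M_{ij}=q$ if $ij\in E(H)$ and $M_{ij}=0$ otherwise. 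Equivalently, there are vectors $z_1,\dots,z_{pq+1}$ spanning $\mathbb R^{p}$ with $\langle z_i,z_j\rangle\in\{0,1\}$ for $i\ne j$ and $\|z_i\|^2=d_i/q$, where the value $1$ occurs exactly for adjacent pairs. The plan is to show that such a configuration, subject also to $\mathrm{diam}(H)\le 2$ and to the companion spectral condition $\mathrm{rank}\big(qA(H)-(q-1)D(H)\big)=p(q-1)+2$ coming from the eigenvalue $q$ of $G$, must consist — after an orthogonal change of coordinates — of the vectors $e_1,\dots,e_p$ with suitable multiplicities together with a single copy of $e_1+\cdots+e_p$. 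This says exactly that $H=K_1\vee(K_{q_1}+\cdots+K_{q_p})$ is an apex joined to $p$ disjoint cliques, and the multiplicity constraints then force $q_1=\cdots=q_p=q$, i.e.\ $H\cong F_{p,q}$.

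The main obstacle is precisely this last step. The normalized Laplacian does not see $|E(H)|$ or the degree sequence directly, so these must be recovered from the rank-$p$ structure of $M=qA(H)+D(H)$ rather than from traces; in particular one has to rule out ``irrational'' Gram configurations and ones in which the supporting cliques overlap in more than one vertex or have unequal sizes. I expect to control this by combining (i) positive semidefiniteness and rank $p$ of $M$, (ii) the companion rank of $qA(H)-(q-1)D(H)$, (iii) integrality of the degrees $d_i=q\|z_i\|^2$, and (iv) the identity $\mathrm{tr}\,\mathcal L(H)^2=(pq+1)+2\sum_{ij\in E(H)}(d_id_j)^{-1}$ read off from $Sp(H)$; together these should pin down $|E(H)|=\tfrac12 pq(q+1)$ and the degree sequence $\big(pq,\,q^{(pq)}\big)$, after which the apex-of-cliques structure, and hence $H\cong F_{p,q}$, follows.
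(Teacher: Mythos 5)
Your spectral computation is right (the multiplicities $1$, $p-1$, $p(q-1)+1=pq-p+1$ agree with the paper's Lemma 2.1), the $q=1$ counterexamples via $K_{2,p-1}$ are correct, and the reformulation you set up is genuinely attractive: observing that $M=qA(H)+D(H)=D^{1/2}\bigl((q+1)I-q\mathcal L(H)\bigr)D^{1/2}$ must be positive semidefinite of rank $p$, with off-diagonal entries in $\{0,q\}$, is a different and potentially cleaner packaging than the paper's route (which invokes the van Dam--Omidi characterization of graphs with three $\mathcal L$-eigenvalues and then grinds through Lemmas 2.4--2.7 on minimum-degree vertices and their neighborhoods). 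But the proof stops exactly where the real work begins. The assertion that any rank-$p$ Gram configuration $z_1,\dots,z_{pq+1}$ with $\langle z_i,z_j\rangle\in\{0,1\}$ and $\|z_i\|^2=d_i/q$, subject to $\mathrm{diam}\le 2$ and the companion rank condition, must be (up to an orthogonal map) the vectors $e_1,\dots,e_p$ with multiplicities plus one copy of $e_1+\cdots+e_p$ is precisely the rigidity statement that constitutes the theorem, and you only say you ``expect'' to derive it from items (i)--(iv), which ``should'' pin down $2m=pq(q+1)$ and the degree sequence $(pq,q^{(pq)})$. None of this is carried out, and it is not routine: the normalized Laplacian does not determine $m$ or the degrees, and recovering them is exactly what the paper's Lemmas 2.4--2.7 labor to do (ruling out, vertex by vertex, neighborhoods that overlap in too few vertices, and extracting $\delta=q$, $2m=pq(q+1)$, $d_{y_q}=pq$). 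Moreover, your own $q=1$ example shows the rank-$p$ PSD structure alone admits non-isomorphic realizations, so the argument must exploit $q\ge 2$ quantitatively at the decisive step---and the proposal never exhibits where or how that happens.

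A second, smaller point: the statement you were asked to prove is only the corollary, i.e.\ the case $q=2$ (the friendship graph $F_k=F_{k,2}$), which in the paper is an immediate consequence of Theorem 1.1. Attempting the general theorem is legitimate, but even specializing your outline to $q=2$ does not close the gap above; the claimed structure theorem for the Gram vectors, and the subsequent step forcing all cliques to have equal size $q$ from the multiplicity data, both remain unproved. As it stands this is a promising program, not a proof.
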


\section{Proof of Theorem~\ref{main}}
In order to prove the main results, we need the following lemmas.

\begin{Lemma}\label{sp-of Fpq}
(i). If $p=1$, then the normalized Laplacian spectrum of $F_{1, q}$ is $0, 1+\frac{1}{q}$ with multiplicities $1$ and $q$ respectively.

(ii).  If $p$ is a  positive integer with $p\ge 2$, then the normalized Laplacian spectrum of $F_{p, q}$ is $0, \frac{1}{q}, 1+\frac{1}{q}$ with multiplicities $1$, $p-1$ and $pq-p+1$ respectively.
\end{Lemma}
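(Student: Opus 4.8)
The plan is to work directly with the matrix $\mathcal{L}(F_{p,q}) = I - D^{-1/2}AD^{-1/2}$. First I record the degrees: the apex vertex $v_0$ has degree $pq$, while every other vertex lies inside one of the cliques $K_q^{(1)},\dots,K_q^{(p)}$ and is also joined to $v_0$, hence has degree $(q-1)+1=q$. Part (i) is then immediate, since $F_{1,q}=K_1\vee K_q=K_{q+1}$, whose normalized Laplacian spectrum is $0$ (with multiplicity $1$) and $\tfrac{q+1}{q}=1+\tfrac1q$ (with multiplicity $q$), as recalled in the introduction. So for the rest one may assume $p\ge 2$.

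For part (ii) the strategy is to exhibit an explicit orthogonal decomposition of $\mathbb{R}^{V(F_{p,q})}\cong\mathbb{R}^{pq+1}$ into $\mathcal{L}$-invariant subspaces. Let $U_1$ be the space of vectors that vanish at $v_0$ and whose entries sum to zero on each clique $K_q^{(i)}$; let $U_2$ be the space of vectors that vanish at $v_0$, are constant with value $c_i$ on each $K_q^{(i)}$, and satisfy $\sum_{i=1}^p c_i=0$; and let $U_3$ be the space of vectors that take one common value on all non-apex vertices and an arbitrary value at $v_0$. These subspaces have dimensions $p(q-1)$, $p-1$ and $2$; they are pairwise orthogonal by direct inspection (the inner product between any two collapses a copy-sum that is zero); and $p(q-1)+(p-1)+2=pq+1$, so $\mathbb{R}^{pq+1}=U_1\oplus U_2\oplus U_3$. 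A short evaluation of $(\mathcal{L}x)_w$ at a non-apex vertex $w$ and of $(\mathcal{L}x)_{v_0}$ then shows each summand is $\mathcal{L}$-invariant: on $U_1$ one gets $(\mathcal{L}x)_w=(1+\tfrac1q)x_w$, so $\mathcal{L}$ acts as $(1+\tfrac1q)I$; on $U_2$ the contribution through $v_0$ vanishes because $\sum_i c_i=0$, leaving $(\mathcal{L}x)_w=\tfrac1q x_w$, so $\mathcal{L}$ acts as $\tfrac1q I$; and on $U_3$, in the basis consisting of the value at $v_0$ and the common value elsewhere, $\mathcal{L}$ is represented by $\begin{pmatrix}1 & -\sqrt{p}\\ -\frac{1}{q\sqrt{p}} & \frac{1}{q}\end{pmatrix}$, whose trace is $1+\tfrac1q$ and whose determinant is $\tfrac1q-\tfrac1q=0$, hence whose eigenvalues are $0$ and $1+\tfrac1q$.

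Collecting the contributions yields eigenvalue $0$ with multiplicity $1$, eigenvalue $\tfrac1q$ with multiplicity $p-1$, and eigenvalue $1+\tfrac1q$ with multiplicity $p(q-1)+1=pq-p+1$, which is exactly the assertion in (ii); moreover taking $p=1$ (so $U_2=\{0\}$) recovers (i) from the same computation. The only real work is the bookkeeping in the last step: verifying invariance of the three subspaces and computing the block on $U_3$, where one must track the normalization $1/\sqrt{d_{v_0}d_w}=1/(q\sqrt{p})$ and the number $pq$ of edges at $v_0$; I anticipate no genuine obstacle beyond this. (One could equally phrase the $U_3$ part via the equitable partition $\{\{v_0\},K_q^{(1)},\dots,K_q^{(p)}\}$ and its quotient matrix, but the direct eigenvector computation is shorter.)
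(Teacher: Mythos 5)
Your proof is correct, but it takes a genuinely different route from the paper. The paper disposes of (i) by observing $F_{1,q}=K_{q+1}$ and quoting the known spectrum of the complete graph, and for (ii) it splits into two cases: $q=1$, where $F_{p,1}=K_{1,p}$ is a star with known spectrum $0,1,2$, and $q\ge 2$, where it invokes Proposition~3 of Butler's paper \cite{butler2016} (a ready-made result for joins of this type) to read off the eigenvalues $0,\frac1q,1+\frac1q$ with the stated multiplicities. You instead give a self-contained computation: the orthogonal decomposition $\mathbb{R}^{pq+1}=U_1\oplus U_2\oplus U_3$ into $\mathcal{L}$-invariant subspaces, with $\mathcal{L}$ acting as $(1+\frac1q)I$ on $U_1$ (dimension $p(q-1)$), as $\frac1q I$ on $U_2$ (dimension $p-1$), and on the two-dimensional $U_3$ via the quotient matrix $\bigl(\begin{smallmatrix}1 & -\sqrt{p}\\ -\frac{1}{q\sqrt{p}} & \frac1q\end{smallmatrix}\bigr)$ with eigenvalues $0$ and $1+\frac1q$; I checked the degree bookkeeping ($d_{v_0}=pq$, $d_w=q$) and all three invariance computations, including the evaluations at $v_0$, and they are right. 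What your approach buys is independence from the cited proposition, a uniform treatment of $q=1$ and $q\ge 2$ (the paper handles the star separately), and the fact that $p=1$ drops out as the special case $U_2=\{0\}$, recovering (i); what the paper's approach buys is brevity, since the join formula does all the work in one line.
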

\begin{proof}
  (i). Since $p=1$, $F_{1, q}=K_{q+1}$ and the assertion holds by \cite{butler2016}.

   (ii). If $q=1$, then $F_{p, q}$ is the star graph $K_{1, p}$ and its normalized Laplacian spectrum is  $0, 1,2$  with multiplicities $1$, $p-1$ and $1$ respectively.  If $q\ge 2$,  then the normalized Laplacian spectrum of $K_q$ is  $0, \frac{q}{q-1}$ with multiplicities $1$ and $q-1$ respectively. By Proposition 3 in \cite{butler2016},  the normalized Laplacian eigenvalues of $F_{p, q}$ is $0, \frac{1}{q}, 1+\frac{1}{q}$ with multiplicities $1$, $p-1$ and $pq-p+1$ respectively. Hence (ii) holds.
\end{proof}

\begin{Lemma}\label{p=1q=1}
If $q=1$ and $p\le 2$, then
   $F_{p,q} $ is determined by its normalized Laplacian spectrum. If $q=1$ and $p\ge 3$, then there is a graph $G$, not isomorphic to $F_{p,q}$ such that $Sp(G)=Sp(F_{p, q})$.
  \end{Lemma}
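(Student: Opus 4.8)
The plan is to split the statement into its two halves. In both I will use the elementary fact that $\mathcal{L}(G)$ is an $n\times n$ matrix when $|V(G)|=n$, so that a graph cospectral with $F_{p,1}$ has exactly $p+1$ vertices, together with two standard facts about the normalized Laplacian: the multiplicity of the eigenvalue $0$ equals the number of connected components of $G$ (an isolated vertex being counted as a component), and $2$ is an eigenvalue of $\mathcal{L}(G)$ precisely when $G$ has a bipartite component.

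\emph{The case $q=1$, $p\le 2$.} Here $F_{1,1}=K_2$ and $F_{2,1}=K_{1,2}=P_3$, and by Lemma~\ref{sp-of Fpq} the spectra in question are $\{0,2\}$ and $\{0,1,2\}$. Let $G$ be cospectral with $F_{p,1}$. The eigenvalue $0$ has multiplicity $1$, so $G$ is connected; since $G$ then has at least two vertices it has no isolated vertex, and since $2\in Sp(G)$ and $G$ is connected, $G$ is bipartite. The only connected bipartite graph on two vertices is $K_2$ and the only one on three vertices is $P_3$, so $G\cong F_{p,1}$. (One could also just enumerate the finitely many graphs on at most three vertices and compare spectra directly, noting that $2K_1$, $K_2\cup K_1$ and $K_3$ are ruled out.)

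\emph{The case $q=1$, $p\ge 3$.} Now $F_{p,1}=K_{1,p}$ and, by Lemma~\ref{sp-of Fpq}(ii), $Sp(F_{p,1})=\{0,\,1^{(p-1)},\,2\}$. I will exhibit the cospectral mate $G=K_{2,p-1}$, which also has $p+1$ vertices. The key computation is that for $a,b\ge 1$ the matrix $D^{-1/2}AD^{-1/2}$ of $K_{a,b}$ has rank $2$ with nonzero eigenvalues $1$ and $-1$ (take the two vectors that are constant on each side of the bipartition, weighted by $D^{1/2}$); hence $\mathcal{L}(K_{a,b})=I-D^{-1/2}AD^{-1/2}$ has spectrum $\{0,\,1^{(a+b-2)},\,2\}$. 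With $a=2$ and $b=p-1\ge 2$ this is exactly $\{0,\,1^{(p-1)},\,2\}=Sp(F_{p,1})$, while $K_{2,p-1}$ has $2(p-1)>p$ edges and hence is not isomorphic to the tree $K_{1,p}$. (For $p=3$ this recovers the known $\mathcal{L}$-cospectral pair $C_4$ and $K_{1,3}$.)

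I do not anticipate a genuine obstacle. The only points needing care are: in the small cases, excluding graphs with isolated vertices by inspecting the spectrum rather than just the vertex count; and the short eigenvalue computation for $K_{a,b}$. The real content is the observation that $K_{1,p}$ is $\mathcal{L}$-cospectral with $K_{2,p-1}$ (and more generally with every $K_{a,b}$ having $a+b=p+1$), which fails to produce a non-isomorphic partner exactly when $p+1\le 3$.
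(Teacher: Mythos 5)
Your proof is correct and follows essentially the same route as the paper: the small cases $K_2$ and $K_{1,2}$ are handled directly, and for $p\ge 3$ the cospectral mate is a complete bipartite graph $K_{r,s}$ with $r+s=p+1$ (the paper cites Butler for this family, with a typo writing $r+s=q+1$, while you carry out the short eigenvalue computation yourself). The only difference is that your argument is self-contained where the paper delegates to a reference.
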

  \begin{proof}
  If $q=1$ and $p\le 2$, clearly, $F_{p,q} $ is $K_2$  or $K_{1,2}$. So $F_{p, q}$  is determined by its normalized Laplacian spectrum. If $q=1$ and $p\ge 3$, then  by \cite{butler2016}, the normalized Laplacian spectrum of $F_{p, q}$ is the same  as that of $K_{r,s}$, where $r+s=q+1$. So there is a graph $G$, not isomorphic to $F_{p,q}$ such that $Sp(G)=Sp(F_{p, q})$.
  \end{proof}
 From now, we assume that $p\ge 2$ and $q\ge 2$. In order to characterize graphs with three normalized Laplacian eigenvalues,  for a simple connected graph $G$, denote by
$$\widehat{d}_u :=\sum_{v\sim u}\frac{1}{d_v} \ \ \mbox{for} \ u\in V(G);$$
 $$\widehat{\lambda}_{uv} :=\sum_{w\sim u, w\sim v}\frac{1}{d_w}\ \ \mbox{ for}\  u\sim v;$$ and
$$\widehat{\mu}_{uv} :=\sum_{w\sim u, w\sim v}\frac{1}{d_w}\ \ \mbox{ for}\  u\nsim v,$$
where $u\sim v (u\nsim v)$ means that $u$ and $v$ are (not) adjacent in $G$.
It follows from Theorem~1 in \cite{van2011} that the following assertion holds.
\begin{Lemma}\label{3-d-e}
Let $G$ be a simple connected graph with $m$ edges and let $q\ge 2$ be a positive integer. Then $G$ has three distinct $\mathcal{L}-$eigenvlaues $0, \frac{1}{q}, 1+\frac{1}{q}$  if and only if the
following three properties hold.

 \begin{equation}\label{3-d-e-1}  \widehat{d}_u
 = \frac{(q+1)d_u^2}{2mq^2}+\frac{(q-1)d_u}{q^2},\ \  u\in V(G);
\end{equation}
\begin{equation}\label{3-d-e-2} \widehat{\lambda}_{uv}
=\frac{(q+1)d_ud_v}{2mq^2} +\frac{q-2}{q},\ \  u\sim v;
\end{equation}

\begin{equation}\label{3-d-e-3}  \widehat{\mu}_{uv}
=\frac{(q+1)d_ud_v}{2mq^2},  u\nsim v.\end{equation}
\end{Lemma}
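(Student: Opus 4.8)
The plan is to read off the three identities from the spectral decomposition of the normalized adjacency matrix $P := I-\mathcal{L}(G)=D^{-1/2}AD^{-1/2}$, where $A$ is the adjacency matrix of $G$ and $D=\mathrm{diag}(d_v)$. Observe first that $\mathcal{L}(G)$ has distinct eigenvalues $0,\tfrac1q,1+\tfrac1q$ exactly when $P$ has distinct eigenvalues $1,\tfrac{q-1}{q},-\tfrac1q$. Since $G$ is connected, the eigenvalue $1$ of $P$ is simple with Perron eigenvector $x_0:=D^{1/2}\mathbf 1=(\sqrt{d_v})_v$ (indeed $Px_0=x_0$), and $\|x_0\|^2=\sum_v d_v=2m$. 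Consequently the two non-Perron eigenspaces of $P$ together span $x_0^{\perp}$, so $(P-\tfrac{q-1}{q}I)(P+\tfrac1q I)$ annihilates $x_0^{\perp}$ and acts on $\mathrm{span}(x_0)$ as the scalar $(1-\tfrac{q-1}{q})(1+\tfrac1q)=\tfrac{q+1}{q^2}$. Hence $P$ has exactly those three eigenvalues if and only if
\begin{equation*}
P^{2}-\frac{q-2}{q}P-\frac{q-1}{q^{2}}I \;=\; \frac{q+1}{2mq^{2}}\,x_0 x_0^{\mathsf T}.
\end{equation*}

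Next I would compare this matrix identity entrywise. Using $P_{uu}=0$, $P_{uv}=A_{uv}/\sqrt{d_ud_v}$, and $(P^{2})_{uv}=\tfrac{1}{\sqrt{d_ud_v}}\sum_{w\sim u,\,w\sim v}\tfrac1{d_w}$, we get $(P^{2})_{uu}=\widehat d_u/d_u$, $(P^{2})_{uv}=\widehat\lambda_{uv}/\sqrt{d_ud_v}$ for $u\sim v$, and $(P^{2})_{uv}=\widehat\mu_{uv}/\sqrt{d_ud_v}$ for $u\neq v$, $u\nsim v$; also $(x_0x_0^{\mathsf T})_{uv}=\sqrt{d_ud_v}$. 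The diagonal entries, after multiplying through by $d_u$, yield \eqref{3-d-e-1}; the entries with $u\sim v$, after multiplying by $\sqrt{d_ud_v}$, yield \eqref{3-d-e-2}; and the entries with $u\nsim v$ yield \eqref{3-d-e-3}. Since these three cases exhaust all pairs $(u,v)$, the displayed matrix identity is equivalent to the simultaneous validity of \eqref{3-d-e-1}--\eqref{3-d-e-3}. In particular this gives the direction actually needed in the sequel: if $Sp(G)=Sp(F_{p,q})$ then, $G$ being connected with three distinct $\mathcal{L}$-eigenvalues $0,\tfrac1q,1+\tfrac1q$ by Lemma~\ref{sp-of Fpq}(ii), the relations \eqref{3-d-e-1}--\eqref{3-d-e-3} hold for $G$. (Equivalently, the Lemma is the specialization of Theorem~1 of \cite{van2011} to $\tfrac1q$ and $1+\tfrac1q$.)

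For the converse, the matrix identity together with $Px_0=x_0$ forces $(P-\tfrac{q-1}{q}I)(P+\tfrac1q I)=0$ on $x_0^{\perp}$, so every $\mathcal{L}$-eigenvalue of $G$ lies in $\{0,\tfrac1q,1+\tfrac1q\}$; that all three actually occur follows once $G$ is not complete, since a connected graph with at most two distinct $\mathcal{L}$-eigenvalues is a clique and the only clique whose spectrum is contained in this set is $K_{q+1}$ (excluded here by the standing assumption $p\ge 2$). The one point to be careful about is thus this caveat — strictly, $K_{q+1}$ also satisfies \eqref{3-d-e-1}--\eqref{3-d-e-3} while having only two distinct eigenvalues — but it does not affect our application. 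Beyond this, the only real work is the routine entrywise bookkeeping; the conceptual core, the rank-one identity above, is immediate once connectedness pins down the Perron eigenvector, so I do not anticipate a genuine obstacle.
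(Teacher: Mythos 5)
Your proof is correct, but it is worth noting that the paper does not actually prove this lemma at all: it simply quotes it as the specialization of Theorem~1 of \cite{van2011} to the eigenvalues $\tfrac1q$ and $1+\tfrac1q$. What you have done is reprove that result from scratch via the rank-one quadratic identity $(P-\tfrac{q-1}{q}I)(P+\tfrac1q I)=\tfrac{q+1}{2mq^2}x_0x_0^{\mathsf T}$ for $P=I-\mathcal{L}=D^{-1/2}AD^{-1/2}$, with connectedness pinning down the simple Perron eigenvector $x_0=D^{1/2}\mathbf 1$, and then reading off \eqref{3-d-e-1}--\eqref{3-d-e-3} entrywise from $(P^2)_{uv}$; this is essentially the same mechanism that underlies van Dam--Omidi's theorem, so your argument buys self-containedness at the cost of redoing their bookkeeping, while the paper buys brevity by citation. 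Your caveat is well taken and is the one substantive point of divergence: as literally stated, the ``if'' direction fails for $K_{q+1}$, which satisfies \eqref{3-d-e-1}--\eqref{3-d-e-3} (with $2m=q(q+1)$) yet has only the two distinct eigenvalues $0$ and $1+\tfrac1q$, so the converse needs either the exclusion of the complete graph or the weaker reading that the spectrum is \emph{contained} in $\{0,\tfrac1q,1+\tfrac1q\}$. Since the paper only ever applies the forward direction --- to graphs $G$ with $Sp(G)=Sp(F_{p,q})$, $p\ge 2$, which already have all three eigenvalues by Lemma~\ref{sp-of Fpq}(ii) --- this imprecision is harmless downstream, and your proof fully establishes the direction that is actually used.
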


\begin{Lemma}\label{lemma2}
Let $G=(V(G), E(G))$ be a simple graph and let $\delta$ be the minimum degree of $G$.  If $Sp(G)=Sp(F_{p,q})$  with positive integers $p\ge 2$ and $q\ge 2$, then the following three properties  hold.

(i).   $|V(G)|= pq+1;$

(ii). $G$ is connected;

 (iii).  $2\le \delta\le q+1$.
 \end{Lemma}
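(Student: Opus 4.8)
The plan is to obtain (i) and (ii) directly from the explicit spectrum in Lemma~\ref{sp-of Fpq}(ii) and then to extract (iii) from the three structural identities of Lemma~\ref{3-d-e}. For (i) I would simply count eigenvalues: the number of $\mathcal L$-eigenvalues of a graph (with multiplicities) equals its order, and $1+(p-1)+(pq-p+1)=pq+1$, so $Sp(G)=Sp(F_{p,q})$ forces $|V(G)|=pq+1$. For (ii) I would use the standard fact that the multiplicity of the eigenvalue $0$ of $\mathcal L(G)$ equals the number of connected components of $G$ (an isolated vertex counting as a component on its own); since in $Sp(F_{p,q})$ this multiplicity is $1$ and $|V(G)|=pq+1\ge 5$, $G$ has exactly one component, i.e.\ $G$ is connected (and in particular has no isolated vertex, so $\delta\ge 1$).

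For (iii) the starting point is that, because $p\ge 2$ and $q\ge 2$, each of $0$, $\tfrac1q$, $1+\tfrac1q$ really occurs in $Sp(G)$ (the multiplicities $1$, $p-1$, $pq-p+1$ are all positive), so $G$ is a connected graph whose set of distinct $\mathcal L$-eigenvalues is exactly $\{0,\tfrac1q,1+\tfrac1q\}$. Hence Lemma~\ref{3-d-e} applies and \eqref{3-d-e-1}, \eqref{3-d-e-2}, \eqref{3-d-e-3} all hold for $G$, with $m=|E(G)|$; these are the only facts I would need.

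To get $\delta\ge 2$ I would argue by contradiction: if some vertex $v$ has $d_v=1$ with unique neighbour $u$, then $u$ and $v$ have no common neighbour (such a neighbour would have to be $u$ itself), so $\widehat\lambda_{uv}=0$. But the right-hand side of \eqref{3-d-e-2} for the edge $uv$ equals $\frac{(q+1)d_u}{2mq^2}+\frac{q-2}{q}$, which is strictly positive for every $q\ge 2$ (the first summand is positive since $d_u\ge 1$, and the second is nonnegative) — a contradiction. To get $\delta\le q+1$ I would apply \eqref{3-d-e-1} to an arbitrary vertex $u$ and use that each of the $d_u$ neighbours of $u$ has degree at least $\delta$, whence $\widehat d_u\le d_u/\delta$; cancelling the positive factor $d_u$ gives $\frac{(q+1)d_u}{2mq^2}+\frac{q-1}{q^2}\le\frac1\delta$, and discarding the positive first term yields $\frac{q-1}{q^2}<\frac1\delta$, i.e.\ $\delta<\frac{q^2}{q-1}=q+1+\frac1{q-1}$. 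Since $q\ge 2$ gives $\frac1{q-1}\le 1$, this forces $\delta\le q+1$.

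I do not expect any serious obstacle here: every step is routine once the setup is in place. The only points that require genuine care are verifying up front that the hypotheses of Lemma~\ref{3-d-e} are actually met — which is precisely why (ii) and the positivity of the three eigenvalue multiplicities must be established first — and the elementary estimate $\frac{q^2}{q-1}\le q+2$ for all $q\ge 2$ used to close the upper bound.
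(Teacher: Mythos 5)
Your proof is correct and follows essentially the same route as the paper: (i) by counting eigenvalues, (ii) from the multiplicity of $0$, the upper bound $\delta\le q+1$ from \eqref{3-d-e-1} via $\widehat d\le d/\delta$ and the integrality estimate $\frac{q^2}{q-1}\le q+2$, and $\delta\ge 2$ from the strict positivity of the right-hand side of \eqref{3-d-e-2} forcing a common neighbour. The only cosmetic differences are that the paper applies \eqref{3-d-e-1} at a minimum-degree vertex (giving $\widehat d_x\le 1$ directly) and phrases the $\delta\ge2$ step directly rather than by contradiction.
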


\begin{proof} (i) is obvious and (ii) follows from the fact that the number of components of $G$ is equal to the multiplicity of 0.

(iii). Let $x\in V(G)$  with $d_x=\delta$.  By (\ref{3-d-e-1}) in Lemma~\ref{3-d-e}, we have
$$\frac{q-1}{q^2}d_x<\widehat{d}_x=\sum_{w\sim x}\frac{1}{d_w}\le 1.$$
Hence $d_x<\frac{q^2}{q-1}\le q+2$. So $\delta=d_x\le q+1$ since  $\delta$ is integer. By (ii), $d(x)=\delta\ge 1$. So there exists a vertex $u\in V(G)$ such that $u\sim x$. By (\ref{3-d-e-2}) in Lemma~\ref{3-d-e},  $\widehat{\lambda}_{ux}>0$.  Then there exists another vertex $w\neq u, x$ such that it is adjacent to both $u$ and $x$. So $\delta\ge 2$.
\end{proof}

\begin{Lemma}\label{lemm3}
Let $G=(V(G), E(G))$ be a simple graph  with $m$ edges and the minimum degree $\delta$ such that $Sp(G)=Sp(F_{p,q})$ with positive integers $p\ge 2$ and $q\ge 2$.  If $d_x=\delta$ for $x\in V(G)$ with the neighbor set $N(x)=\{y_1, \ldots, y_\delta\}$ and $d_{y_1}\le \cdots\le d_{y_\delta}$, then $d_{y_1}=\delta$.
\end{Lemma}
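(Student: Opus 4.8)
The plan is to suppose $d_{y_1}\ge\delta+1$ and derive a contradiction. Since $p,q\ge 2$, Lemma~\ref{sp-of Fpq} gives that $Sp(G)=Sp(F_{p,q})$ consists of exactly the three eigenvalues $0,\frac1q,1+\frac1q$, so by Lemma~\ref{3-d-e} the identities \eqref{3-d-e-1}--\eqref{3-d-e-3} hold for $G$, and by Lemma~\ref{lemma2} we have $2\le\delta\le q+1$. Two elementary remarks drive the argument. First, since $\delta=d_x$ is the minimum degree, every neighbour of $x$ has degree at least $\delta$, so $\widehat{d}_x=\sum_{i=1}^{\delta}\frac1{d_{y_i}}\le\frac{\delta}{d_{y_1}}\le 1$; moreover, under the hypothesis $d_{y_1}\ge\delta+1$ all the $d_{y_i}$ are $\ge\delta+1$, whence $\widehat{d}_x\le\frac{\delta}{\delta+1}$. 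Second, since $x\sim y_1$, every common neighbour of $x$ and $y_1$ lies in $N(x)\setminus\{y_1\}$, so $\widehat{\lambda}_{xy_1}\le\widehat{d}_x-\frac1{d_{y_1}}$; also $\widehat{d}_x\le\frac{\delta}{d_{y_1}}$ combined with \eqref{3-d-e-1} at $x$ forces $d_{y_1}<\frac{q^2}{q-1}$, hence $d_{y_1}\le q+1$.

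\noindent\textbf{Reducing to $\delta=q$ and $d_{y_1}=q+1$.} Substituting \eqref{3-d-e-1} at $x$ into $\widehat{d}_x\le\frac{\delta}{\delta+1}$ yields $\frac{(q+1)\delta}{2mq^2}\le\frac1{\delta+1}-\frac{q-1}{q^2}$; since the left side is positive this forces $q^2>(q-1)(\delta+1)$, which excludes $\delta=q+1$. Substituting \eqref{3-d-e-1} and \eqref{3-d-e-2} into $\widehat{\lambda}_{xy_1}\le\widehat{d}_x-\frac1{d_{y_1}}$ and using $d_{y_1}\ge\delta$ makes its right side $\frac{(q-1)\delta-q^2+2q}{q^2}$ positive, i.e. $(q-1)\delta>q(q-2)$; with $2\le\delta\le q$ this leaves $\delta=q$, or else $q\ge3$ and $\delta=q-1$. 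In the latter case the sharper inequality $\frac1{d_{y_1}}+\frac{(q+1)\delta}{2mq^2}\le\frac{(q-1)\delta-q^2+2q}{q^2}=\frac1{q^2}$ (valid because $d_{y_1}-\delta\ge1$), combined with \eqref{3-d-e-1}, forces $d_{y_1}>q^2$, contradicting $d_{y_1}\le q+1$. Hence $\delta=q$, and then $d_{y_1}\le q+1$ together with $d_{y_1}\ge\delta+1$ gives $d_{y_1}=q+1$.

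\noindent\textbf{Finishing, and where the work lies.} With $\delta=q$ and $d_{y_1}=q+1$, every neighbour of $x$ has degree $\ge q+1$. If they all equal $q+1$, then $\widehat{d}_x=\frac{q}{q+1}$, so \eqref{3-d-e-1} forces $2m=q(q+1)^2$; feeding this into \eqref{3-d-e-2} for the edge $xy_1$ gives $\widehat{\lambda}_{xy_1}=\frac{(q-1)^2}{q^2}$, while directly $\widehat{\lambda}_{xy_1}=\frac{|N(x)\cap N(y_1)|}{q+1}$ because every common neighbour of $x$ and $y_1$ then has degree $q+1$; equating these would force $q^2\mid(q-1)^2(q+1)$, i.e. $q^2\mid q-1$, which is impossible. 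The only other possibility, that some neighbour of $x$ has degree $\ge q+2$, is excluded by the same philosophy: one sharpens $\widehat{d}_x<\frac{q}{q+1}$ and analyses \eqref{3-d-e-1}--\eqref{3-d-e-3} along the edges and non-edges incident to that high-degree neighbour. The point worth stressing is that \eqref{3-d-e-1}--\eqref{3-d-e-3} all involve the edge count $m$, which is not determined by the spectrum, so no single inequality can finish the lemma; the real work is precisely to trap $\delta$ and $d_{y_1}$ in a short list using the two elementary remarks and Lemma~\ref{lemma2}, after which the system is rigid enough that $m$ (or a common-neighbour count) is over-determined and fails integrality. Therefore $d_{y_1}=\delta$.
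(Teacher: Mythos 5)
Your reduction to $\delta=q$ and $d_{y_1}=q+1$ is correct (a nice variant of the paper's handling of the case $d_{y_1}\ge\delta+2$), and your integrality contradiction in the subcase where \emph{all} neighbours of $x$ have degree exactly $q+1$ is valid. But the remaining subcase --- $\delta=q$, $d_{y_1}=q+1$, and some $d_{y_i}\ge q+2$ --- is only asserted to be ``excluded by the same philosophy,'' and that philosophy does not work as described. Sharpening $\widehat{d}_x<\frac{q}{q+1}$ (or the analogous bound on $\widehat{\lambda}_{xy_1}$) in this regime only produces \emph{lower} bounds on $2m$, e.g.\ $2m\ge\frac{q(q+1)^2(q+2)}{2}$ from \eqref{3-d-e-1}, and these are not contradictions: as you yourself note, $m$ is not determined by the spectrum, and $n=pq+1$ can be arbitrarily large, so no inequality of this type can close the case. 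Since you no longer know the degrees of all the $y_i$, you also cannot extract an exact value of $2m$ the way you did in your first subcase, so the integrality trick is unavailable without further input. This is a genuine gap, not a routine verification.

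The way to close it (and it is essentially the paper's argument for the case $d_{y_1}=\delta+1$, which covers both of your subcases at once) is: first pin down $|N(x)\cap N(y_1)|=\delta-1$ by noting that every common neighbour of $x$ and $y_1$ lies in $\{y_2,\dots,y_\delta\}$ and has degree $\ge\delta+1$, so $|N(x)\cap N(y_1)|\le\delta-2$ would give $\frac{q-2}{q}<\frac{\delta-2}{\delta+1}$, i.e.\ $\delta>\frac{3q-2}{2}$, incompatible with $\delta=q$ (or with $\delta\le q$). Then write \eqref{3-d-e-1} and \eqref{3-d-e-2} for the pair $x,y_1$ with $N(x)\cap N(y_1)=\{y_2,\dots,y_\delta\}$ and subtract; with $d_x=\delta=q$ and $d_{y_1}=q+1$ this gives the exact value $2m=(q+1)^2$. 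Finally, since $d_{y_2}\ge q+1$, the vertex $y_2$ has a neighbour $w$ outside $\{x,y_1,\dots,y_\delta\}$ with $d_w\ge\delta\ge 2$, so $2m\ge\delta+\delta(\delta+1)+2=(\delta+1)^2+1>(q+1)^2$, a contradiction. Without some argument of this type that fixes $2m$ exactly and then counts, your last case remains unproven.
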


\begin{proof}
We consider the following two cases.

{\bf Case 1:} $d_{y_1}\ge \delta+2$. By (\ref{3-d-e-1}) in Lemma~\ref{3-d-e}, we have
$$\frac{(q-1)\delta}{q^2}=\frac{(q-1)d_x}{q^2}<\widehat{d}_x = \sum_{w\sim x}\frac{1}{d_w}\le \frac{\delta}{\delta+2},$$
which implies $\delta<q-1+\frac{1}{q-1}$. So $\delta\le q-1$ by $q\ge 2$.
On the other hand, by (\ref{3-d-e-2}) in Lemma~\ref{3-d-e}, we have
$$\frac{q-2}{q}<\widehat{\lambda}_{xy_1}=\sum_{w\sim x, w\sim y_1}\frac{1}{d_w}\le \frac{\delta-1}{\delta+2},$$
which implies  $\delta>\frac{3q-4}{2}$. Hence $q-1\ge\delta>\frac{3q-4}{2}$, i.e., $q<2$. A contradiction.

{\bf Case 2:} $d_{y_1}=\delta+1$. Then  $|N(x)\bigcap N(y_1)|=\delta-1$. In fact, suppose that $ |N(x)\bigcap N(y_1)|<\delta-1$.
  by (\ref{3-d-e-2}) in Lemma~\ref{3-d-e}, we have
$$\frac{q-2}{q}<\widehat{\lambda}_{xy_1}=\sum_{w\sim x, w\sim y_1}\frac{1}{d_w}\le \frac{\delta-2}{\delta+1},$$
which implies that $\frac{3q-2}{2}<\delta$.
On the other hand, by (\ref{3-d-e-1}) in Lemma~\ref{3-d-e}, we have
$$ \frac{(q-1)d_x}{q^2}<\widehat{d}_x=\sum_{i=1}^\delta\frac{1}{d_{y_i}}\le \frac{\delta}{\delta+1},$$
which implies  $\delta<q+\frac{1}{q-1}$ . So $\delta\le q$ by $q\ge 2$.  Hence $\frac{3q-2}{2}<\delta\le q$, i.e., $q<2$. This is a contradiction.
So  $|N(x)\bigcap N(y_1)|=\delta-1$, i.e, $ N(x)\bigcap N(y_1)=\{y_2, \ldots, y_\delta\}$.  Then by (\ref{3-d-e-1}) and (\ref{3-d-e-2}) in Lemma~\ref{3-d-e}, we have
\begin{equation}\label{LEMMA3-1}  \frac{(q+1)d_x^2}{2mq^2}+\frac{(q-1)d_x}{q^2}=\sum_{i=1}^\delta\frac{1}{d_{y_i}},
\end{equation}
\begin{equation}\label{lemma3-2}
\frac{(q+1)d_xd_{y_1}}{2mq^2}+\frac{(q-2)}{q}=\sum_{i=2}^\delta\frac{1}{d_{y_i}}.
\end{equation}
Hence by (\ref{LEMMA3-1}) and $\delta+1=d_{y_1}\le \cdots \le d_{y_\delta}$, we have $$\frac{(q-1)\delta}{q^2}< \sum_{i=1}^\delta\frac{1}{d_{y_i}}\le \frac{\delta}{\delta+1}.$$
Thus $\delta<q+\frac{1}{q-1}$, i.e., $\delta\le q$ by $q\ge 2$.  On the other hand, by  (\ref{lemma3-2}), we have
$\frac{q-2}{q}<\frac{\delta-1}{\delta+1}$,  which implies that $q<\delta+1$, i.e., $q\le \delta$. Hence $\delta=q\ge 2$. Furthermore,
by $d_x=\delta=q$,  subtracting (\ref{lemma3-2}) from  (\ref{LEMMA3-1}) yields
$2m=(\delta+1)^2$.  Since $d_{y_2}\ge d_{y_1}\ge \delta+1$, there exists a vertex $w\notin\{x, y_1, \ldots, y_\delta\}$ with $d_w\ge \delta\ge 2$
Hence
$ (\delta+1)^2=2m\le \delta+(\delta+1)\delta+2=(\delta+1)^2+1$, which is a contradiction.

 Therefore by Cases 1 and 2, we have $d_{y_1}=\delta$  and the assertion holds.
\end{proof}

\begin{Lemma}\label{lemm4}

Let $G=(V(G), E(G))$ be a simple graph  with $m$ edges and the minimum degree $\delta$ such that $Sp(G)=Sp(F_{p,q})$ for positive integers $p\ge 2$ and $q\ge 2$.  If $d_x=\delta$ for $x\in V(G)$ with the neighbor set $N(x)=\{y_1, \ldots, y_\delta\}$ and $d_{y_1}\le \cdots\le d_{y_\delta}$, then $N(x)\bigcap N(y_1)=\{y_2, \ldots, y_\delta\}$.
\end{Lemma}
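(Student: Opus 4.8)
The plan is to argue by contradiction, using the three identities of Lemma~\ref{3-d-e} together with the two facts already available: $d_{y_1}=\delta$ (Lemma~\ref{lemm3}) and $2\le\delta\le q+1$ (Lemma~\ref{lemma2}). Since any common neighbour of $x$ and $y_1$ is a neighbour of $x$ distinct from $y_1$, it lies in $\{y_2,\dots,y_\delta\}$; hence the claim is equivalent to saying that every $y_i$ with $i\ge2$ is adjacent to $y_1$. So I would assume the contrary, writing $D:=\{\,y_i:\ i\ge2,\ y_i\nsim y_1\,\}$, a nonempty subset of $\{y_2,\dots,y_\delta\}$, so that $1\le|D|\le\delta-1$, and aim for a contradiction.

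The first step is to extract a numerical identity by subtracting two of the relations. Evaluating (\ref{3-d-e-1}) at $x$ and (\ref{3-d-e-2}) at the edge $xy_1$, and using $d_x=d_{y_1}=\delta$, gives $\sum_{i=1}^{\delta}\frac1{d_{y_i}}=\frac{(q+1)\delta^2}{2mq^2}+\frac{(q-1)\delta}{q^2}$ and $\sum_{w\in N(x)\cap N(y_1)}\frac1{d_w}=\frac{(q+1)\delta^2}{2mq^2}+\frac{q-2}{q}$. As $N(x)\cap N(y_1)\subseteq\{y_2,\dots,y_\delta\}$, subtracting cancels the common term $\frac{(q+1)\delta^2}{2mq^2}$ and leaves
\[
\frac1\delta+\sum_{w\in D}\frac1{d_w}=\frac{(q-1)\delta-q(q-2)}{q^2};
\]
call this $(\star)$. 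If $\delta\le q$, the right-hand side of $(\star)$ is increasing in $\delta$ and equals $\frac1q$ when $\delta=q$, hence it is at most $\frac1q\le\frac1\delta$, while the left-hand side is at least $\frac1\delta$; equality is therefore forced throughout, so $\sum_{w\in D}\frac1{d_w}=0$, i.e.\ $D=\varnothing$, contradicting our assumption.

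The case $\delta=q+1$ is the one I expect to be the main obstacle, because $(\star)$ alone no longer rules out $D\ne\varnothing$ and one must also invoke the third relation (\ref{3-d-e-3}). Here I would first note that, $G$ being connected with minimum degree $\delta$, every $d_{y_i}\ge\delta=q+1$, so the left-hand side $\widehat{d}_x=\sum_i\frac1{d_{y_i}}$ of (\ref{3-d-e-1}) is at most $1$; equating it with the right-hand side gives $2m\ge(q+1)^3$. Next, for each $y_j\in D$ the vertex $x$ is a common neighbour of the non-adjacent pair $y_1,y_j$ (since $y_j\sim x\sim y_1$), so $\frac1{q+1}=\frac1{d_x}\le\widehat{\mu}_{y_1y_j}$, while (\ref{3-d-e-3}) with $d_{y_1}=q+1$ gives $\widehat{\mu}_{y_1y_j}=\frac{(q+1)^2d_{y_j}}{2mq^2}$; combining these with $2m\ge(q+1)^3$ yields $d_{y_j}\ge\frac{2mq^2}{(q+1)^3}\ge q^2$. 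Substituting $d_{y_j}\ge q^2$ for all $y_j\in D$ into $(\star)$, whose right-hand side is now $\frac{2q-1}{q^2}$, gives $\frac{|D|}{q^2}\ge\frac{2q-1}{q^2}-\frac1{q+1}=\frac{q^2+q-1}{q^2(q+1)}$, so $|D|\ge q-\frac1{q+1}$ and hence $|D|\ge q=\delta-1$. Together with $|D|\le\delta-1$ this forces $|D|=\delta-1$, i.e.\ $N(x)\cap N(y_1)=\varnothing$ and $\widehat{\lambda}_{xy_1}=0$; but (\ref{3-d-e-2}) gives $\widehat{\lambda}_{xy_1}=\frac{(q+1)^3}{2mq^2}+\frac{q-2}{q}>0$ since $q\ge2$, a contradiction. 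Therefore $D=\varnothing$, and since $\{y_2,\dots,y_\delta\}\subseteq N(x)$ we conclude $N(x)\cap N(y_1)=\{y_2,\dots,y_\delta\}$.
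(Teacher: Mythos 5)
Your proof is correct, but it follows a genuinely different route from the paper's. The paper splits according to the size of $N(x)\bigcap N(y_1)$: for $|N(x)\bigcap N(y_1)|\le\delta-3$ it uses only (\ref{3-d-e-2}) and the bound $\delta\le q+1$ to force $q<2$, and for $|N(x)\bigcap N(y_1)|=\delta-2$ it first pins down $\delta=q+1$, then subtracts (\ref{3-d-e-2}) from (\ref{3-d-e-1}) to compute the missing neighbour's degree exactly as $q+\frac{q}{q^2+q-1}$, contradicting integrality. You instead split on $\delta\le q$ versus $\delta=q+1$: your identity $(\star)$ disposes of all defect sizes at once when $\delta\le q$ via a clean monotonicity comparison (no integrality needed), while for $\delta=q+1$ you bring in the non-adjacency relation (\ref{3-d-e-3}) -- which the paper never uses in this lemma -- together with the bound $2m\ge(q+1)^3$ extracted from $\widehat{d}_x\le1$, to force every $y_j\nsim y_1$ to have degree at least $q^2$, whence $|D|=\delta-1$ and the contradiction $\widehat{\lambda}_{xy_1}=0$ against the strictly positive value demanded by (\ref{3-d-e-2}). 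All the estimates check out ($d_{y_j}\ge 2mq^2/(q+1)^3\ge q^2$, $\sum_{w\in D}1/d_w=\frac{q^2+q-1}{q^2(q+1)}$, $|D|\ge q-\frac{1}{q+1}$), so the argument is sound. The paper's version is somewhat shorter in the hard case, hinging on a single non-integer degree; yours costs the extra input (\ref{3-d-e-3}) and the edge bound, but it buys a more uniform treatment of the $\delta\le q$ range and avoids the integrality computation, and it makes explicit the useful quantitative fact $2m\ge(q+1)^3$ in the $\delta=q+1$ regime.
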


\begin{proof} By Lemma~\ref{lemm3}, $d_{y_1}=\delta$. Now we consider the following two cases.

 {\bf Case 1:} $|N(x)\bigcap N(y_1)|\le \delta-3$.  By (\ref{3-d-e-2}) in Lemma~\ref{3-d-e}, we have
 $$\frac{q-2}{q}<\widehat{\lambda}_{xy_1}=\sum_{w\sim x, w\sim y_1}\frac{1}{d_w}\le \frac{\delta-3}{\delta},$$
 which yields that $\frac{3q}{2}<\delta$.  By Lemma~\ref{lemma2}(iii), we get $\frac{3q}{2}<\delta\le q+1$, i.e., $q<2$. This contradicts to the condition.

{\bf Case 2:} $|N(x)\bigcap N(y_1)|=\delta-2$.  By (\ref{3-d-e-2}) in Lemma~\ref{3-d-e}, we have $\frac{q-2}{q}<\frac{\delta-2}{\delta}$, which yields $q<\delta$, i.e., $q\le \delta-1$. By Lemma~\ref{lemma2}(iii), we have $\delta=q+1$.
By (\ref{3-d-e-1}) and (\ref{3-d-e-2}) in Lemma~\ref{3-d-e}, we have
\begin{equation}\label{lemma4-1}
\frac{(q+1)d_x^2}{2mq^2}+\frac{(q-1)d_x}{q^2}=\sum_{i=1}^\delta\frac{1}{d_{y_i}},
\end{equation}
\begin{equation}\label{lemma4-2}
\frac{(q+1)d_xd_{y_1}}{2mq^2}+\frac{q-2}{q}=\sum_{i=2}^\delta \frac{1}{d_{y_i}}-\frac{1}{d_{y_j}}\ \mbox {for some}\  2\le j\le \delta.
\end{equation}
By   $d_x=\delta=q+1$ and $d_{y_1}=\delta$, subtracting (\ref{lemma4-2}) from (\ref{lemma4-1}) yields
$$\frac{(q-1)(q+1)}{q^2}-\frac{q-2}{q}=\frac{1}{q+1}+\frac{1}{d_{y_j}}.$$
Hence
$d_{y_j}=q+\frac{q}{q^2+q-1}$  contradicts the fact that  $d_{y_j}$ is an integer.

Hence by Cases 1 and 2, we have $|N(x)\bigcap N(y_1)|= \delta-1$  and  the assertion holds.
\end{proof}

\begin{Lemma}\label{lemm5}

Let $G=(V(G), E(G))$ be a simple graph  with $m$ edges and the minimum degree $\delta$ such that $Sp(G)=Sp(F_{p,q})$ for positive integers $p\ge 2$ and $q\ge 2$.  If $d_x=\delta$ for $x\in V(G)$ with the neighbor set $N(x)=\{y_1, \ldots, y_\delta\}$  and $\delta=d_x=d_{y_1}=\cdots =d_{y_k}<d_{y_{k+1}}\le \cdots\le d_{y_\delta}$ for $1\le k\le \delta-1$,  then $ N(x)\bigcap N(y_i)=\{y_1, \ldots, y_{i-1}, y_{i+1}, \ldots, y_\delta\}$ for $i=1, \ldots, k$ and   $d_{y_{k+1}}\ge \delta+2$.
\end{Lemma}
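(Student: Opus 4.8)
The plan is to handle the two assertions in turn. For the statement about $N(x)\cap N(y_i)$ with $1\le i\le k$, the key point is that each such $y_i$ is itself a vertex of minimum degree $\delta$, so Lemmas~\ref{lemm3} and~\ref{lemm4} apply with $y_i$ playing the role of $x$. Since $x\sim y_i$ and $d_x=\delta$, the vertex $x$ is a minimum-degree neighbour of $y_i$; listing $N(y_i)$ in non-decreasing order of degree with $x$ first and applying Lemma~\ref{lemm4} gives $N(y_i)\cap N(x)=N(y_i)\setminus\{x\}$. This set has $\delta-1$ elements, is contained in $N(x)=\{y_1,\dots,y_\delta\}$, and does not contain $y_i$; since $\{y_1,\dots,y_\delta\}\setminus\{y_i\}$ also has $\delta-1$ elements, the two sets coincide. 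Hence $N(y_i)=\{x,y_1,\dots,y_{i-1},y_{i+1},\dots,y_\delta\}$ and in particular $N(x)\cap N(y_i)=\{y_1,\dots,y_{i-1},y_{i+1},\dots,y_\delta\}$, as required. A by-product I will use below is that each of $x,y_1,\dots,y_k$ has all of its neighbours inside $W:=\{x,y_1,\dots,y_\delta\}$, a set of $\delta+1$ vertices.

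For the inequality $d_{y_{k+1}}\ge\delta+2$, I would first determine $\delta$. Applying (\ref{3-d-e-1}) at $x$ and (\ref{3-d-e-2}) at the edge $xy_1$ (whose endpoints both have degree $\delta$ by Lemma~\ref{lemm3}) and subtracting, using $N(x)\cap N(y_1)=\{y_2,\dots,y_\delta\}$ from Lemma~\ref{lemm4}, gives $\widehat d_x-\widehat\lambda_{xy_1}=1/d_{y_1}=1/\delta=\frac{(q-1)\delta}{q^2}-\frac{q-2}{q}$, i.e.\ $(q-1)\delta^2-q(q-2)\delta-q^2=0$. Solving this quadratic (its discriminant is $q^4$) shows that its only positive root is $\delta=q$; hence from now on $\delta=q$ and $|W|=q+1$.

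Now suppose, for contradiction, that $d_{y_{k+1}}=\delta+1=q+1$ (by hypothesis we already know $d_{y_{k+1}}>\delta$). By the first part, $x,y_1,\dots,y_k\in W$ are all adjacent to $y_{k+1}$, so $k+1\le|N(y_{k+1})\cap W|\le|W|-1=q<q+1$; therefore $y_{k+1}$ has $t:=|N(y_{k+1})\setminus W|\ge1$ neighbours outside $W$, and $q+1-t=|N(y_{k+1})\cap W|\ge k+1$ forces $t\le q-k$. Fix any $z^*\in N(y_{k+1})\setminus W$ and apply (\ref{3-d-e-2}) to the edge $y_{k+1}z^*$. A common neighbour of $y_{k+1}$ and $z^*$ lies in $N(y_{k+1})$: it cannot be one of $x,y_1,\dots,y_k$, whose neighbourhoods lie inside $W$ while $z^*\notin W$; at most $q-k-t$ of them are among $y_{k+2},\dots,y_q$, each of degree $\ge q+1$; and at most $t-1$ lie outside $W$, each of degree $\ge q$. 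Hence
\[
\frac{(q+1)^2 d_{z^*}}{2mq^2}+\frac{q-2}{q}=\widehat\lambda_{y_{k+1}z^*}\le\frac{q-k-t}{q+1}+\frac{t-1}{q},
\]
which rearranges to $0<\frac{(q+1)^2 d_{z^*}}{2mq^2}\le\frac{t+1-qk}{q(q+1)}$. Therefore $t\ge qk\ge q$, contradicting $t\le q-k\le q-1$. This rules out $d_{y_{k+1}}=\delta+1$, so $d_{y_{k+1}}\ge\delta+2$.

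The main obstacle is the last step: making the bound on $\widehat\lambda_{y_{k+1}z^*}$ small enough that the right-hand side of (\ref{3-d-e-2}) is forced non-positive. This succeeds only because of the structural output of the first part — that the minimum-degree vertices $x,y_1,\dots,y_k$ have no neighbours outside $W$ — which confines every common neighbour of the edge $y_{k+1}z^*$ either to the few higher-degree vertices $y_{k+2},\dots,y_q$ or to the $t-1$ remaining outside neighbours of $y_{k+1}$; it is this scarcity that yields $t\ge qk$. I expect the only delicate bookkeeping to be the degenerate counts (for instance $t=1$, or $\{y_{k+2},\dots,y_q\}=\emptyset$ when $k=q-1$), which should cause no trouble.
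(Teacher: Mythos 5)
Your proposal is correct, and while the first half coincides with the paper's (the paper also gets $N(x)\cap N(y_i)=\{y_1,\ldots,y_{i-1},y_{i+1},\ldots,y_\delta\}$ by applying Lemma~\ref{lemm4} with $y_i$ as the minimum-degree centre and $x$ as its least-degree neighbour), your way of excluding $d_{y_{k+1}}=\delta+1$ is genuinely different. The paper leaves $\delta$ undetermined (using only $\delta\le q+1$ from Lemma~\ref{lemma2}) and runs a three-way case analysis on $|N(x)\cap N(y_{k+1})|$ ($\le\delta-3$, $=\delta-2$, $=\delta-1$), killing each case by combining (\ref{3-d-e-1}) and (\ref{3-d-e-2}) on the edge $xy_{k+1}$ with edge-count estimates on $2m$ (its Subcase 3.1, for instance, must produce two extra vertices to force $2m>(q+1)^2$). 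You instead first pin down $\delta=q$ by subtracting (\ref{3-d-e-2}) on $xy_1$ from (\ref{3-d-e-1}) at $x$; this computation is exactly the one the paper performs only later, in Lemma~\ref{lemma6}(i), and since it uses nothing beyond Lemmas~\ref{lemm3} and~\ref{lemm4} there is no circularity. You then exploit the structural by-product of part one --- that $x,y_1,\ldots,y_k$ have all their neighbours inside $W=\{x,y_1,\ldots,y_\delta\}$ --- to apply (\ref{3-d-e-2}) to an edge $y_{k+1}z^*$ leaving $W$, an edge the paper never considers; your bookkeeping ($t\le q-k$ on one side, $t\ge qk\ge q$ from the rearranged inequality, with the boundary cases $t=1$ and $k=q-1$ absorbed by the same computation) is sound. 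The trade-off: your route is shorter, avoids the case split and the delicate $2m$ estimates, and incidentally renders the proof of Lemma~\ref{lemma6}(i) redundant; the paper's route works purely with equations anchored at $x$ and does not need the full confinement of the neighbourhoods of $y_1,\ldots,y_k$, at the cost of more arithmetic case-checking.
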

\begin{proof} By Lemma~\ref{lemm4} and $\delta=d_x=d_{y_1}=\cdots =d_{y_k},$ it is easy to see that $ N(x)\bigcap N(y_i)=\{y_1, \ldots, y_{i-1}, y_{i+1}, \ldots, y_\delta\}$ for $i=1, \ldots, k$.
Now we prove that  $d_{y_{k+1}}\ge \delta+2$. Suppose that $d_{y_{k+1}}< \delta+2$. Then  $d_{y_{k+1}}=\delta+1$ by $d_{y_{k+1}}>d_{y_k}=\delta$.  We consider the following three cases.

{\bf Case 1:}  $|N(x)\bigcap N(y_{y_{k+1}})|\le \delta-3.$
By (\ref{3-d-e-2}) in Lemma~\ref{3-d-e}, we have
$$\frac{q-2}{q}<\widehat{\lambda}_{xy_{k+1}}=\sum_{w\sim x, w\sim y_{k+1}}\frac{1}{d_w}\le \frac{\delta-3}{\delta},$$
which implies that $\delta>\frac{3q}{2}$. By Lemma~\ref{lemma2} (iii), $\delta\le q+1$.  Thus  we have $q+1>\frac{3q}{2}$, i.e., $q<2$. A contradiction.

{\bf Case 2:} $|N(x)\bigcap N(y_{y_{k+1}})|= \delta-2.$ By (\ref{3-d-e-2}) in Lemma~\ref{3-d-e}, we have
$$\frac{q-2}{q}<\widehat{\lambda}_{xy_{k+1}}\le \frac{\delta-2}{\delta}.$$
Then $q<\delta$, i.e., $q+1\le \delta$. By Lemma~\ref{lemma2} (iii), $\delta\le q+1$. Thus we have $\delta=q+1$.
By (\ref{3-d-e-1}) and (\ref{3-d-e-2}) in Lemma~\ref{3-d-e}, we have
\begin{equation}\label{lemma5-1}
\frac{(q+1)d_x^2}{2mq^2}+\frac{(q-1)d_x}{q^2}=\sum_{i=1}^\delta\frac{1}{d_{y_i}}
\end{equation}
and
\begin{equation}\label{lemma5-2}
\frac{(q+1)d_xd_{y_{k+1}}}{2mq^2}+\frac{q-2}{q}=\sum_{i=1}^\delta\frac{1}{d_{y_i}}-\frac{1}{d_{y_{k+1}}}-\frac{1}{d_{j}}\ \  \mbox{for some}\  k+2\le j\le \delta.
\end{equation}
By  $d_x=\delta=q+1$ and $d_{y_{k+1}}=\delta+1=q+2$,  subtracting (\ref{lemma5-2}) from (\ref{lemma5-1}) yields
that
$$\frac{1}{d_{y_j}}=\frac{q^2+3q-2}{q^2(q+2)}-\frac{(q+1)^2}{2mq^2}.$$
Since $2m\ge d_x+d_{y_1}+\cdots+d_{y_\delta}> \delta(\delta+1)=(q+1)(q+2)$, we have
$$\frac{1}{d_{y_j}}>\frac{q^2+3q-2}{q^2(q+2)}-\frac{(q+1)^2}{(q+1)(q+2)q^2}.$$
So $d_{y_j}< q+2=\delta+1$ contradicts that $d_{y_j}\ge d_{y_{k+1}}=\delta+1$.

{\bf Case 3:} $|N(x)\bigcap N(y_{y_{k+1}})|= \delta-1.$ Then $N(x)\bigcap N(y_{y_{k+1}})=\{y_1, \ldots, y_k,$ $ y_{k+2},$ $ \ldots, y_\delta\}$.
By (\ref{3-d-e-1}) and (\ref{3-d-e-2}) in Lemma~\ref{3-d-e}, we have
\begin{equation}\label{lemma5-3}
\frac{(q+1)d_x^2}{2mq^2}+\frac{(q-1)d_x}{q^2}=\sum_{i=1}^\delta\frac{1}{d_{y_i}},
\end{equation}
\begin{equation}\label{lemma5-4}
\frac{(q+1)d_xd_{y_{k+1}}}{2mq^2}+\frac{q-2}{q}=\sum_{i=1}^\delta\frac{1}{d_{y_i}}-\frac{1}{d_{y_{k+1}}}.
\end{equation}
Subtracting (\ref{lemma5-4}) from (\ref{lemma5-3}) yields
that
\begin{equation}\label{lemma5-5}
\frac{(q-1)\delta}{q^2}=\frac{q-2}{q}+\frac{1}{\delta+1}+\frac{(q+1)\delta}{2mq^2}.
\end{equation}
By $ \delta\le q+1$ in Lemma~\ref{lemma2} and (\ref{lemma5-5}), we have
$$\frac{(q-1)\delta}{q^2}> \frac{q-2}{q}+\frac{1}{q+2},$$
which implies   $\delta> q-1$ by $q\ge 2$.  Hence $\delta=q$ or $\delta=q+1$.
 We consider the following two subcases.

 {\bf Subcase 3.1:} $\delta=q$. By (\ref{lemma5-5}), we have $$2m=(q+1)^2.$$  Furthermore, by (\ref{lemma5-3}), we have
 $$\frac{1}{q+1}+\frac{q-1}{q}=\sum_{i=1}^\delta\frac{1}{d_{y_i}}\le \frac{k}{\delta}+\frac{\delta-k}{\delta+1}.$$
 Thus $k\ge \delta-1$ and $k=\delta-1$.  Since $d_{k+1}= \delta+1$, there exists a vertex $z\in V(G)\setminus \{x, y_1, \ldots, y_{\delta}\}$ such that $z\sim y_{\delta}$.  Furthermore, $N(z) \bigcap \{x, y_1, \ldots, y_{\delta-1}\}=\emptyset$. So there exists another vertex $w\in V(G)\setminus\{x, y_1, \ldots, y_{\delta}, z\}$ by $d_z\ge \delta\ge 2$. Therefore, $2m\ge \delta^2+(\delta+1)+d_z+d_w>(\delta+1)^2=(q+1)^2$ which contradicts to $2m=(q+1)^2$.

{\bf Subcase 3.2:} $\delta=q+1$. By (\ref{lemma5-5}), we have
$$2m=\frac{(q+1)^2(q+2)}{q^2+3q-2}<2q+2.$$ It is a contradiction.
Hence  $d_{y_{k+1}}\ge \delta+2$ and the assertion holds.\end{proof}

\begin{Lemma}\label{lemma6}

Let $G=(V(G), E(G))$ be a simple graph  with $m$ edges and the minimum degree $\delta$ such that $Sp(G)=Sp(F_{p,q})$ for positive integers $p\ge 2$ and $q\ge 2$.  If $d_x=\delta$ for $x\in V(G)$ with the neighbor set $N(x)=\{y_1, \ldots, y_\delta\}$  and $\delta=d_x=d_{y_1}=\cdots =d_{y_k}<d_{y_{k+1}}\le \cdots\le d_{y_\delta}$ for some $1\le k\le \delta-1$,  then

(i) $\delta=q$.

(ii) $N(x)\bigcap N(y_{k+1})=\{y_1, \cdots, y_k, y_{k+2}, \cdots, y_\delta\}$.

(iii) $k=q-1$.

(iv) $d_{y_q}=pq$ and $d_w=q $ for $w\in V(G)\setminus\{y_q\}$.

(v). $2m=pq(q+1)$.

\end{Lemma}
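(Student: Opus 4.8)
The plan is to establish the five parts in order, using the identities \eqref{3-d-e-1}, \eqref{3-d-e-2}, \eqref{3-d-e-3} together with the structural information from Lemmas~\ref{lemm3}--\ref{lemm5}. For part (i): apply \eqref{3-d-e-1} at $x$ and \eqref{3-d-e-2} at the edge $xy_1$; by Lemma~\ref{lemm4}, $N(x)\cap N(y_1)=\{y_2,\dots,y_\delta\}$, so subtracting the two identities isolates $\frac{1}{d_{y_1}}=\frac{1}{\delta}$ on the left (using $d_{y_1}=\delta$ from Lemma~\ref{lemm3}) and produces $\frac{1}{\delta}=\frac{(q-1)\delta}{q^2}-\frac{q-2}{q}$, i.e.\ $(q-1)\delta^2-q(q-2)\delta-q^2=0$, whose roots are $q$ and $-\frac{q}{q-1}$; since $\delta\ge 2$ this gives $\delta=q$. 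From now on $\delta=q$, so $N(x)=\{y_1,\dots,y_q\}$ and, by Lemma~\ref{lemm5}, $\{x,y_1,\dots,y_k\}$ is a clique each of whose members is adjacent to all of $y_{k+1},\dots,y_q$ as well (and to no further vertex, since those members have degree $q$), while $d_{y_i}\ge q+2$ for $i>k$. Put $s:=q-k\ge 1$.

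A device used repeatedly from here on is the \emph{universal degree bound}: every neighbour of a vertex $v$ has degree $\ge\delta=q$, so $\widehat{d}_v\le d_v/q$, and feeding this into \eqref{3-d-e-1} yields $d_v\le \frac{2m}{q+1}$, with equality iff every neighbour of $v$ has degree exactly $q$. For part (ii): if $s=1$ then $k=q-1$ and Lemma~\ref{lemm5} already gives $y_q\sim y_1,\dots,y_{q-1}$, so $N(x)\cap N(y_q)=\{y_1,\dots,y_{q-1}\}$. If $s\ge 2$ and $y_{k+1}\nsim y_j$ for some $j\in\{k+2,\dots,q\}$, then $\{x,y_1,\dots,y_k\}\subseteq N(y_{k+1})\cap N(y_j)$, so \eqref{3-d-e-3} gives $\widehat{\mu}_{y_{k+1}y_j}\ge\frac{k+1}{q}$, and combined with the universal bound on $d_{y_j}$ this forces $d_{y_{k+1}}\ge q(k+1)$. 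On the other hand \eqref{3-d-e-1} at $x$ reads $\frac{k}{q}+\sum_{i=k+1}^q\frac{1}{d_{y_i}}=\frac{q+1}{2m}+\frac{q-1}{q}$, whence $\sum_{i>k}\frac{1}{d_{y_i}}>\frac{s-1}{q}$ and, $\frac{1}{d_{y_{k+1}}}$ being the largest of these $s$ summands, $d_{y_{k+1}}<\frac{sq}{s-1}$. Then $q(k+1)<\frac{sq}{s-1}$, i.e.\ $(k+1)(s-1)<s$, contradicting $(k+1)(s-1)\ge 2(s-1)\ge s$. Hence $y_{k+1}$ is adjacent to every $y_j$ with $j\ne k+1$, which proves (ii).

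For part (iii), suppose $s\ge 2$. By (ii), $N(x)\cap N(y_{k+1})=\{y_1,\dots,y_q\}\setminus\{y_{k+1}\}$, so \eqref{3-d-e-2} at $xy_{k+1}$ reads $\frac{k}{q}+\sum_{i=k+2}^q\frac{1}{d_{y_i}}=\frac{(q+1)d_{y_{k+1}}}{2mq}+\frac{q-2}{q}$; subtracting this from \eqref{3-d-e-1} at $x$ leaves $\frac{q-d_{y_{k+1}}}{q\,d_{y_{k+1}}}=\frac{(q+1)(q-d_{y_{k+1}})}{2mq}$, and cancelling the nonzero factor $q-d_{y_{k+1}}$ yields $2m=(q+1)d_{y_{k+1}}$. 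By the equality case of the universal bound, every neighbour of $y_{k+1}$ then has degree $q$ --- impossible, because $y_{k+1}\sim y_{k+2}$ by (ii) while $d_{y_{k+2}}\ge q+2$. Hence $s=1$, i.e.\ $k=q-1$. Now \eqref{3-d-e-1} at $x$ gives $\frac{q-1}{q}+\frac{1}{d_{y_q}}=\frac{q+1}{2m}+\frac{q-1}{q}$, so $2m=(q+1)d_{y_q}$. By Lemma~\ref{lemm5} the vertices $x,y_1,\dots,y_{q-1}$ have all their neighbours inside $\{x,y_1,\dots,y_q\}$; hence for each of the $q(p-1)$ vertices $z\in V(G)\setminus\{x,y_1,\dots,y_q\}$, the only conceivable common neighbour of the non-adjacent pair $x,z$ is $y_q$, while \eqref{3-d-e-3} forces $\widehat{\mu}_{xz}=\frac{(q+1)d_xd_z}{2mq^2}>0$, so in fact $y_q\sim z$. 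Therefore $d_{y_q}\ge q+q(p-1)=pq$, and since $d_{y_q}\le|V(G)|-1=pq$ we obtain $d_{y_q}=pq$ and $2m=pq(q+1)$, which is (v). Finally, for such $z$ we now have $\widehat{\mu}_{xz}=\frac{1}{d_{y_q}}=\frac{1}{pq}$ and also $\widehat{\mu}_{xz}=\frac{(q+1)q\,d_z}{2mq^2}=\frac{d_z}{pq^2}$, so $d_z=q$; together with $d_x=d_{y_1}=\dots=d_{y_{q-1}}=q$ this gives $d_w=q$ for every $w\ne y_q$, which is (iv).

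The step I expect to be the crux is part (iii): showing that $x$ has a \emph{unique} neighbour of degree exceeding $q$. Its resolution rests on the universal degree bound together with its equality characterisation, and on the observation that subtracting \eqref{3-d-e-1} at $x$ from \eqref{3-d-e-2} at the edge $xy_{k+1}$ pins $2m=(q+1)d_{y_{k+1}}$ down exactly; what legitimises that subtraction is part (ii), whose own proof requires the companion estimate coming from \eqref{3-d-e-3} at a non-edge among the $y_i$. Everything else is routine manipulation of the three identities.
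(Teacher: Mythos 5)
Your proof is correct, and it reaches all five conclusions, but several of its sub-arguments are genuinely different from the paper's. Part (i) is essentially the paper's own subtraction of (\ref{3-d-e-2}) at a minimum-degree edge from (\ref{3-d-e-1}) at $x$ (the paper uses $y_k$ where you use $y_1$; same computation). For part (ii) the paper is much shorter: if $|N(x)\cap N(y_{k+1})|\le\delta-2$, then (\ref{3-d-e-2}) gives $\frac{q-2}{q}<\widehat{\lambda}_{xy_{k+1}}\le\frac{\delta-2}{\delta}$, i.e.\ $q<\delta$, contradicting (i); your route instead invokes (\ref{3-d-e-3}) at a hypothetical non-edge $y_{k+1}y_j$ together with your ``universal degree bound'' $d_v\le\frac{2m}{q+1}$ extracted from (\ref{3-d-e-1}) --- correct, but heavier machinery than needed. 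For part (iii) both arguments pivot on the same identity $2m=(q+1)d_{y_{k+1}}$ (you obtain it by subtracting (\ref{3-d-e-2}) at $xy_{k+1}$ from (\ref{3-d-e-1}) at $x$, the paper by comparing (\ref{3-d-e-2}) at $xy_k$ and $xy_{k+1}$); the finishes differ: the paper bounds $\sum_i 1/d_{y_i}$ to get $(q-k-1)(d_{y_{k+1}}-q)\le0$ and hence $k=q-1$ directly, while you use the equality case of your degree bound to force all neighbours of $y_{k+1}$ to have degree $q$ and contradict $d_{y_{k+2}}\ge q+2$ --- a clean alternative. The largest divergence is in (iv)--(v): the paper stays purely arithmetic, writing $(\delta+1)d_{y_\delta}=2m\ge d_{y_\delta}+\delta\cdot pq$ to squeeze $d_{y_\delta}=pq$ and, from the equality case, $d_w=\delta$ for all other $w$; you instead use (\ref{3-d-e-3}) at the non-adjacent pairs $x,z$ to show $y_q$ is adjacent to every outside vertex (hence $d_{y_q}=pq$) and then again to pin $d_z=q$ --- in effect you absorb into the lemma the structural step the paper postpones to the proof of Theorem~\ref{main}. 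Your reusable bound $d_v\le\frac{2m}{q+1}$, with its equality characterisation, is a nice unifying device absent from the paper, at the cost of longer arguments where the paper's direct counting estimates on $\widehat{\lambda}$ and on $2m$ suffice.
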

\begin{proof}
(i). By Lemma~\ref{lemm5}, we have $N(x)\bigcap N(y_k)=\{y_1, \cdots, y_{k-1}, y_{k+1}, \cdots, y_\delta\}$.
 Hence by (\ref{3-d-e-1}) and (\ref{3-d-e-2}) in Lemma~\ref{3-d-e}, we have
 \begin{equation}\label{lemma6-1}\frac{(q+1)d_x^2}{2mq^2}+\frac{(q-1)d_x}{q^2}=\sum_{i=1}^\delta\frac{1}{d_{y_i}},
 \end{equation}
 \begin{equation}\label{lemma6-2}\frac{(q+1)d_xd_{y_k}}{2mq^2}+\frac{q-2}{q}=\sum_{i=1}^\delta\frac{1}{d_{y_i}}-\frac{1}{d_{y_k}}.\end{equation}
 By $d_x=d_{y_k}=\delta$, subtracting (\ref{lemma6-2}) from (\ref{lemma6-1}) yields
 $\frac{(q-1)\delta}{q^2}-\frac{q-2}{q}=\frac{1}{\delta}$. So $\delta=q$ and (i) holds.

(ii).  Suppose that $|N(x)\bigcap N(y_{k+1})|\le \delta-2$. Then by (\ref{3-d-e-2}) in Lemma~\ref{3-d-e}, we have
 $$\frac{q-2}{q}<\widehat{\lambda}_{xy_{k+1}}=\sum_{w\sim x, w\sim y_{k+1}}\frac{1}{d_{w}}\le \frac{\delta-2}{\delta}.$$
 So $q<\delta$,  which contradicts to (i). So $|N(x)\bigcap N(y_{k+1})|=\delta-1$ and (ii) holds.

 (iii). By (\ref{3-d-e-2}) in Lemma~\ref{3-d-e} and (ii), we have
 \begin{equation}\label{lemma6-3}
 \frac{(q+1)d_xd_{y_k}}{2mq^2}+\frac{q-2}{q}=\sum_{i=1}^\delta\frac{1}{d_{y_i}}-\frac{1}{d_{y_k}},\end{equation}
   \begin{equation}\label{lemma6-4}
 \frac{(q+1)d_xd_{y_{k+1}}}{2mq^2}+\frac{q-2}{q}=\sum_{i=1}^\delta\frac{1}{d_{y_i}}-\frac{1}{d_{y_{k+1}}}.\end{equation}
Subtracting (\ref{lemma6-3}) from (\ref{lemma6-4}) yields
$$\frac{(q+1)d_x(d_{y_{k+1}}-d_{y_k})}{2mq^2}=-\frac{1}{d_{y_{k+1}}}+\frac{1}{d_{y_k}}.$$
Moreover, by $d_{y_{k+1}}> d_{y_k}=d_x=\delta =q$ from (ii),  we have $2m=(q+1)d_{y_{k+1}}$. Furthermore, by (\ref{3-d-e-1}) in Lemma~\ref{3-d-e},
$$\frac{(q+1)d_x^2}{2mq^2}+\frac{(q-1)d_x}{q^2}=\sum_{i=1}^\delta\frac{1}{d_{y_i}}\le \frac{k}{\delta}+\frac{\delta-k}{d_{y_{k+1}}}.$$
By $2m=(q+1)d_{y_{k+1}}$ and $d_x=\delta=q$,
 we have $$ \frac{1}{d_{y_{k+1}}}+\frac{q-1}{q}\le  \frac{k}{q}+\frac{q-k}{d_{y_{k+1}}},$$
i.e.,  $ (q-k-1)(d_{y_{k+1}}-q)\le 0$. So $k\ge q-1=\delta-1$.  Hence $k=\delta-1=q-1$ and (iii) holds.

 (iv). By (iii), $2m=(q+1)d_{y_\delta}$. Moreover, by Lemma~\ref{lemma2} (i), $|V(G)|=pq+1$. Hence
\begin{equation}\label{lemma6-5}
(\delta+1)d_{y_\delta}=2m =d_{y_\delta}+\sum_{w\neq y_\delta}d_w\ge d_{y_\delta}+\delta(pq).\end{equation}
 Hence $d_{y_\delta}\ge pq$. However, $d_{y_\delta}\le |V(G)|-1=pq$. So $d_{y_\delta}=pq$. Furthermore, (\ref{lemma6-5}) becomes equality, which implies $d_w=\delta$ for $w\neq y_\delta$. So (iv) holds.

 (v). By (i), (iii) and (iv), we  have  $q=\delta$, $2m=(q+1)d_{y_\delta}$ and $d_{y_\delta}=pq$, which implies $2m=(q+1)pq$.
\end{proof}

 Now we are  ready to prove the main theorem in this paper.

 {\bf Proof   of Theorem~\ref{main}.} If $p=1,$ then $F_{1,q}=K_{q+1}$, so it  is determined by its normalized Laplacian spectrum.
If $q=1$ and $p\le 2$, or $q=1$ and $p\ge 3$, the assertion follows from Lemma~\ref{p=1q=1}.

  Now we assume that $p\ge 2$ and $q\ge 2$.
Let $G$ be any graph with $Sp(G)=Sp(F_{p, q})$. Then $G$ is a connected graph on $pq+1$ vertices by Lemma~\ref{lemma2}. Furthermore let $d_x=\delta(G)$ for $x\in V(G)$ and $N(x)=\{y_1, \ldots, y_\delta\}$ with $d_{y_1}\le \cdots\le d_{y_\delta}$.
By Lemma~\ref{lemm3}, $d_{y_1}=\delta$. Hence we assume that $\delta=d_{y_1}=\cdots=d_{y_k}<d_{y_{k+1}}\le \cdots\le d_{y_\delta}$,
$1\le k\le \delta$. Then $1\le k\le \delta-1$, otherwise $G=K_{\delta+1}$ has only two distinct $\mathcal{L}-$ eigenvalues. Hence by Lemma~\ref{lemma6}, we have $\delta=q$, $d_{y_1}=\cdots=d_{y_{q-1}}=q$ and  the induced subgraph $G[x, y_1, \cdots, y_{q-1}]$ by vertex set $\{x, y_1, \cdots, y_{q-1}\}$ is a clique of order $q$.  Furthermore, $y_q$ is adjacent to each vertex in $G$.  For any $u\in V(G)\setminus\{x, y_1, \ldots, y_q\}$, then $u$ is not adjacent to $x, y_1, \ldots, y_{q-1}$.  By (\ref{3-d-e-3}) in Lemma~\ref{3-d-e}, we have
$$
\widehat{\mu}_{ux}=\sum_{w\sim u, w\sim v}\frac{1}{d_w}=\frac{1}{d_{y_q}} = \frac{(q+1)d_ud_x}{2mq^2}. $$
By Lemma~\ref{lemma6}, $d_u=q=\delta$, i.e., the degree of each vertex except $y_q$ in $V(G)$ is $q.$   Choosing any vertex $v\in V(G)\setminus \{x, y_1, \ldots, y_q\}$, let $N(v)=\{z_1, \ldots, z_{q-1}, y_q\}$ since $d_v=q$ and $y_q\sim v$.  Then $d_{z_1}=\cdots =d_{z_{q-1}}=q$ and $N(z)\bigcap \{x, y_1, \ldots, y_q\}=\{y_q\}$.
By (\ref{3-d-e-2}) in Lemma~\ref{3-d-e}, we have
$$\frac{(q+1)d_vd_{z_i}}{2mq^2}+\frac{q-2}{q}=\sum_{w\sim v, w\sim z_i}\frac{1}{d_w}=\frac{1}{d_{y_q}}+\frac{|N(v)\bigcap N(z_i)\setminus\{y_q\}|}{q}, $$
for $v\sim z_i$.
Since $2m=pq(q+1)$, $d_v=d_{z_i}=q$ and $d_{y_q}=pq$, we have $|N(v)\bigcap N(z_i)\setminus\{y_q\}|=q-1$. Hence  the induced subgraph $G[v,z _1, \ldots, z_{q-1}]$ by vertex set $\{v, z_1, \ldots, z_{q-1}\}$ is a clique of order $q$.
By repeating the above  process, $G$ must be $F_{p, q}$. So the assertion holds.\hfill $\square$

\noindent {\bf Acknowledgement.} The authors would like to thank the referee for very constructive suggestions and comments on this paper.


\begin{thebibliography}{12}


\bibitem{butler2015}S.~Butler, Algebraic aspects of the normalized Laplacian. {\it Recent trends in combinatorics, 295-315, IMA Vol. Math. Appl., 159,} Springer, 2016
    
    \bibitem{butler2011}S.~Butler and J.~Grout,  A construction of cospectral graphs for the normalized Laplacian. {\it Electron. J. Combin.}  18  (2011),  no. 1, Paper 231, 20 pp.

    

    \bibitem{butler2016}S.~Butler and K.~Heysse, A cospectral family of graphs for the normalized Laplacian found by toggling. {\it Linear Algebra Appl.}  507(2016) 499-512.

\bibitem{fan1997} F. Chung, Spectral Graph Theory, 2nd edn. AMS, Providence, 1997.



        \bibitem{cioaba2015}S.~M.~Cioab\u{a}, W.~H.~Haemers, J.~Vermette and W.~ Wong, The graphs with all but two eigenvalues equal to  $\pm1$,
        {\it J. Algebr. Comb.}  41(2015) 887-897
\bibitem{erdos1966}P.~Erd\H{o}s, A.~R\'{e}nyi and V.~S\'{o}s,  On a problem of graph theory. {\it Studia Sci. Math. Hung.} 1 (1966) 215-235.


 \bibitem{lin2010}Y.-Q.~Lin, J.-L.~Shu and Y.~Meng,  Laplacian spectrum characterization of extensions of vertices of wheel graphs and multi-fan graphs. {\it Comput. Math. Appl.}  60  (2010) 2003-2008.



 \bibitem{liu2008}X.-G.~Liu,Y.-P.~Zhang and X.-Q.~Gui,  The multi-fan graphs are determined by their Laplacian spectra. {\it Discrete Math.}  308  (2008) 4267-4271.

     \bibitem{van2003}   E. R. Van Dam and W. H. Haemers, Which graphs are determined by their spectrum?, {\it Linear Algebra Appl.,} 373
(2003) 241-272.
\bibitem{van2009}E. R. Van Dam and W. H. Haemers, Developments on spectral characterizations of graphs, {\it Discrete Math.,} 309 (2009)
576-586.
\bibitem{van2011}E.~R.~Van Dam and G.~R.~Omidi, Graphs whose normalized Laplacian has three eigenvalues. {\it Linear Algebra Appl.}  435  (2011) 2560-2569.

\bibitem{wang2010}J.-F.~Wang, F.~Belardo, Q.-X.~Huang and B.~Borovicanin, On the two largest Q-eigenvalues of graphs.
{\it Discrete. Math.} 310 (2010) 2858-2866.


\end{thebibliography}
\end{document}